\tikzstyle{decision} = [diamond, draw, fill=blue!20, 
\tikzstyle{block} = [rectangle, draw, fill=blue!20, 
\tikzstyle{line} = [draw, -latex']
\tikzstyle{cloud} = [draw, ellipse,fill=red!20, node distance=3cm,
\tikzset{main node/.style={circle,fill=blue!20,draw,minimum size=1cm,inner sep=0pt},  }
\begin{document}
\title[Finite player game]{Equilibrium selection via Optimal transport}
\author[Chow, Li, Lu, Zhou]{Shui-Nee Chow, Wuchen Li, Jun Lu and Haomin Zhou}
\thanks{This work is partially supported by NSF 
Awards DMS\textendash{}1419027,  DMS-1620345, 
and ONR Award N000141310408.}

\keywords{Game theory; Optimal transport; Gradient flow; Gibbs measure; Entropy; Fisher information.}

\maketitle
\begin{abstract}
 We propose a new dynamics for equilibrium selection of finite player discrete strategy games. The dynamics is motivated by optimal transportation, and models individual players' myopicity, greedy and uncertainty when making decisions. The stationary measure of the dynamics provides each pure Nash equilibrium a probability by which it is ranked. For potential games, its dynamical properties are characterized by entropy and Fisher information.
%The model gives rise to a method to rank/select pure Nash equilibria for both potential and non-potential games.    
\end{abstract}

\section{Introduction}

Game theory plays a vital role in economics, biology, social network, etc. \cite{IT1, nash1950equilibrium, von2007theory, s2009, s1999}. It models conflict and cooperation between rational decision makers. Each player in a game minimizes his or her own cost function. %(maximize payoff or 
%minimize cost).
Nash equilibrium (NE) describes a status that no player is
willing to change his or her strategy unilaterally. A fundamental question in game theory is that
%(1) Under what conditions does Nash equilibrium exist? (2) How can one
%find Nash equilibrium? (3) If
if there are multiple pure Nash equilibria, how can one select or rank them?
%The first two questions has been studied extensively. For example, it's a well known result that mixed strategies always exists \cite{nash1950equilibrium}. 
This problem has been studied previously using various approaches. One classical approach 
%assumes that players are fully rational and have complete knowledge about others. They are always able to select the strategy that minimize the cost. This best response ``criterion" proposes an iterative procedure of the game, following which NEs are selected. In addition, 
\cite{harsanyi1995, harsanyi1988} selects NEs by refining the concept of equilibrium such as payoff dominance and risk dominance principle.
Another class of approaches uses learning dynamics by assuming that the players have bounded knowledge and they need to ``learn'' from what occurred in previous stages of the game and then respond to other players' strategies \cite{Young, Young1}. In these settings irrationalities of individual players are often considered. %The learning processes are usually specified by various rules, called learning dynamics. 
Such examples include
fictitious play, no-regret dynamics, and logit dynamics. To demonstrate the idea, we describe logit dynamics in detail \cite{blume1993}. In logit dynamics, players are assumed to play a game repeatedly. At each time step, one player is selected uniformly at random and its strategy is updated according to a Gibbs-like measure parametrized by a positive number representing the rationality level. 
This process gives rise to a Markov jump process, whose distribution 
% More specifically, let $S=S_{1}\times\cdots\times S_{N}$ be the strategy set where $S_{i}$ is the finite discrete strategy set of player $i$ and let $u_i(x)$ be the cost function of player $i$. Denote the current strategy by $x\in S$. Then during the update, player $i$ will choose strategy $y_i\in S_i$ with probability 
% $ \sigma_i(y,x) = C e^{\beta u_i(x_{-i},y)}$. This process defines a Markov chain $X_t$ with state space $S$ and the transition probability 
% \begin{equation*}
% \begin{split}
% &\textrm{Pr}(X_{t+1}=y\mid X_t=x)\\
% =&\begin{cases}
% \frac{1}{N}\sigma_i(y,x) \quad&\textrm{if}~ y_{-i} = x_{-i}, y_i\ne x_i;\\
% \frac{1}{N}(1-\sum \sigma_i(y,x)) \quad &\textrm{if}~ y=x;\\
% 0\quad& \textrm{otherwise},
% \end{cases}
% \end{split}
% \end{equation*} 
converges to a stationary distribution. By vanishing the rationality parameter, the stationary distributions converges to a unique measure, providing crucial information of the stabilities of the NEs and hence giving rise a mechanism for equilibrium selections \cite{logit}. 
 %For example, it has been shown that for a $2\times 2$ coordination game, the stationary point is always the risk dominant equilibrium if it exists. 

%Such dynamics include Replicator dynamics, 
%Logit dynamics, and Best-response dynamics 
%\cite{kandorimailathrob1993,mcfadden1974}. 

On the other hand, for continuous strategy games, equilibrium selection can be done in a rather natural way by stochastic differential equations (SDEs)  and optimal transport theory. Individual players can be modeled to make decisions according to a stochastic process, named best-reply process \cite{de2014}. There 
%the strategy sets are continuous and 
players change their pure strategies \textit{locally} and simultaneously in a continuous fashion according to the direction that minimizes their own cost function most rapidly. Players' irrationalities are introduced by Brownian motions with a parameter representing irrationality levels. The time evolution of the probability density of the best-reply process is characterized by a Fokker-Planck equation, which is the learning dynamics of the game. For potential games in which all players have the same cost function  named potential, this learning dynamics is the {\em gradient flow} of the free energy in the probability space equipped with Wasserstein metric \cite{am2006,vil2008}. Here the free energy refers to the average of potential plus negative of Shannon-Boltzman entropy, representing the amount of irrationalities or risks taken by the players. This understanding connects the learning dynamics with statistical physics \cite{villani2002review}. Following this connection, if players are purely rational (vanishing the parameter), NEs are stationary points of the players' best-reply process. Thus the invariant measure associated with best-reply dynamics naturally introduces an order of NEs. This ranking method shares many similarities with the one described in \cite{nature2012}, which relates to Conley-Markov matrix. 

%The resulting decision process forms SDEs, whose evolution of probability density function can be viewed as a learning dynamics. 

Motivated by learning dynamics and continuous best-reply processes, we propose a new learning dynamics for discrete strategy games. A key step is to introduce Markov jump processes in discrete space, inspired by the discrete optimal transport theory recently developed in \cite{chow2012,li-theory}. 
Let $S=S_{1}\times\cdots\times S_{N}$ be the strategy set where $S_{i}$ is the finite discrete strategy set of player $i$ and let $u_i(x)$ be the cost function of player $i$. The best-reply process $X_\beta(t)$ is defined with state space $S$ and the transition probability 
\begin{equation}\label{best_reply}
\begin{split}
&\textrm{Pr}(X_\beta(t+h)=y\mid X_\beta(t)=x)\\
=&\begin{cases}
\sum_{i=1}^{N}(\bar u_i(y)-\bar u_i(x))_+h+o(h) \quad&\textrm{if}~ y\in \cup_{i=1}^N\mathcal{N}_i(x)\ ;\\
1-\sum_{i=1}^N\sum_{y\in \mathcal{N}_i(x)}(\bar u_i(y)-\bar u_i(x))_+h+o(h)\quad &\textrm{if}~ y=x\ ;\\
0\quad& \textrm{otherwise}\ ,
\end{cases}
\end{split}
\end{equation}
where $\mathcal{N}_i(x)$ is the neighborhood of strategy $x$ for player $i$ and $y\in\mathcal{N}_i(x)$ if $y$ and $x$ differ only at $S_i$. $\rho(t,x)$ is the probability density function of $X_{\beta}(t)$ and $\bar{u}_i(x)$ is defined as
\begin{equation*}
\bar u_i(x)=u_i(x)+\beta\log\rho(t,x)\ .
\end{equation*}
Term $\rho(t,x)$ can be described from the perspective of individual players as follows. From the beginning of the repeated play of the game, each player simulate $X_{\beta}(\cdot)$ infinitely many times until time $t$ and $\rho(t,x)$ is the distributions of $X_{\beta}(\cdot)$ at $t$. This interpretation is different from that of fictitious play in that players in fictitious play rely on only one realization of the Markov process while our model depends on infinite many simulations. 

Process $X_{\beta}(t)$ describes players' behaviors with three features. Firstly, $X_{\beta}(t)$ reflects players' myopicity when making decisions. In other words, players make their decisions based solely on the most recent information and within the neighborhood in the strategy set.
Secondly, players select next strategy that decrease their collective cost with highest probability. This is to say players are greedy during the decision-making process. Thirdly, term $\log \rho(t,x)$ introduces randomness in discrete settings. This randomness models players' irrationality due to either making mistakes or taking risks. The latter interpretation allows us to regard $\bar{u}_{i}(x)$ as {\em noisy cost}. 
Intuitively, if a strategy profile has large cost but low probability, its noisy cost will be low and hence encourage players to select the profile.

The density function $\rho(t,x)$ enjoys many appealing mathematical properties. For potential games, it can be regarded as a gradient flow that converges to the minimizer of the free energy. It can be shown that the convergence is exponentially fast and the convergence rate can be accurately characterized by {\em relative Fisher information} \cite{vil2008}, a key concept in statistical physics \cite{Fisher}. In addition, the dissipation of the free energy along this learning dynamics exactly equals the relative Fisher information.

The paper is organized in the following order. In section \ref{Game}, we give a
brief introduction to best-reply dynamics and optimal transport theory in continuous spaces; In section \ref{e}, we describe the mathematical properties of best-reply dynamics via optimal transport defined on discrete strategy games. %In section \ref{Markov_process}, we derive the best-reply Markov process. 
The connection of our model and statistical physics is discussed in section 4. In section \ref{examples}, we illustrate equilibrium selections via the proposed dynamics for some well-known games.

\section{%Reviews on 
Equilibrium selection in continuous strategy game}\label{Game}
In this section, we briefly review best-reply dynamics and its connection with optimal transportation theory. 
%\subsection{Best Reply dynamics}

Consider a game consisting $N$ players $i\in\{1,\cdots,N\}$. Each player $i$
chooses a strategy $x_i$ from a Borel strategy set $S_i$, e.g. $S_i=\mathbb{R}^{n_i}$. Denote $S=S_1\times \cdots\times S_N$. 
Let $x$ be the vector of all players' decision variables: \begin{equation*}
x=(x_1,\cdots, x_N)=(x_i, x_{-i})\in S\ ,\quad \textrm{for any $i=1,\cdots, N$\ ,}\end{equation*}
where we use the notation 
\begin{equation*}
x_{-i}=\{x_1,\cdots, x_{i-1}, x_{i+1},\cdots, x_N\}\ .
\end{equation*}
Each player $i$ has  cost function $u_i: S\rightarrow \mathbb{R}$, where
$u_i(x)$ is a globally Lipchitz continuous function with respect to $x$. The objective of each player $i$ is to minimize the cost function
\begin{equation*}
\min_{x_i\in \mathbb{R}^{n_i}}~u_i(x_{i})=u_i(x_i,x_{-i})\ .
\end{equation*}
%  \begin{definition}\label{def1}
A strategy profile $x^*=(x_1^{*},\cdots, x_N^*)$ is a Nash equilibrium (NE) if no player is willing to change his or her current strategy unilaterally
 \begin{equation*}
  u_i(x^*_{i}, x^*_{-i}) \leq u_i(x_{i},x^*_{-i})\quad \textrm{for any $x_i\in S_i$\ , $i=1,\cdots, N$}\ .\label{ne-cont}
 \end{equation*}
% \end{definition}

It is natural to consider stochastic processes to describe players' decisions-making processes in a game. For each player $i$, instead of
finding $x_{i}^{*}$ satisfying NE directly, he or she plays the game according to a stochastic process $X_{i}(t),~t\in[0,+\infty)$. Here $t$
is an artificial time variable, at which player $i$ selects his or her decision
based on the current strategies of all other players
$X_{j}(t),t\in\{1,\cdots,N\}$. It is important to note that all players make
their decisions simultaneously and without knowing others' decisions. Each
player selects a strategy that decreases his or her own cost most rapidly. To model the uncertainties of decision making, an $N$-dimensional independent Brownian motion is added 
\begin{equation}\label{main}
d X_i= -\nabla_{X_i}u_i (X_i, X_{-i})dt + \sqrt{2\beta} dB_{t}^{i}\ , 
\end{equation}
where $\beta>0$ controls the magnitude of the noise. 
%Putting all players'
%process together $X(t)=(X_i(t))_{i=1}^N$ and denoting $f(x)=\big(\nabla_{x_i} u_i(X_i, X_{-i})\big)_{i=1}^N$, one gets
%\begin{equation}\label{main}
%dX=-f(X)dt+\sqrt{2\beta} dB_t\ .
%\end{equation}
SDE \eqref{main} $X(t)=(X_i(t))_{i=1}^N$ is called the best-reply process. Observe that if a Nash equilibrium exists, it is also the
equilibrium of \eqref{main} with $\beta=0$. It is known that the transition
density function $\rho(t,x)$ of the stochastic process $X(t)$ satisfies the 
Fokker-Planck equation (FPE)
\begin{equation*}%\label{fp_general}
\frac{\partial \rho(t,x)}{\partial t}=\nabla\cdot(\rho(t,x)\big(\nabla_{x_i} u_i(x_i, x_{-i})\big)_{i=1}^N)+\beta \Delta\rho(t,x)\ .%\quad \textrm{where $\beta=\frac{\epsilon^2}{2}$\ .}
\end{equation*}

In the case that the game is a potential game, i.e. there exists a $C^1$ potential function $\phi~:~\mathbb{R}^N\rightarrow \mathbb{R}$, such that $\nabla_{x_i} u_i(x_i, x_{-i})=\nabla_{x_i}\phi(x)$. The best-reply process \eqref{main} becomes 
\begin{equation*}\label{cont_sde}
dX=-\nabla \phi(X)dt+\sqrt{2\beta} dB_t\ ,
\end{equation*}
which is a perturbed gradient flow, whose density function satisfies
\begin{equation}\label{FP}
\frac{\partial \rho(t,x)}{\partial t}=\nabla\cdot(\rho(t,x)\nabla \phi(x))+\beta \Delta\rho(t,x)\ .
\end{equation} 
%\subsection{Optimal transport}
%Besides above connections, 
The stationary distribution of \eqref{FP} %or the invariant measure of%\eqref{main} 
is the Gibbs measure given by
\begin{equation*}
\rho^*(x)=\frac{1}{K}e^{-\frac{\phi(x)}{\beta}}\ ,\quad\textrm{where} ~K=\int_{\mathbb{R}^n}e^{-\frac{\phi(x)}{\beta}}dx\ .
\end{equation*}
It's easily seen that the Gibbs measure introduces an order of Nash equilibria in terms of the potential $\phi(x)$. In other words, given two Nash equilibria, the one with larger
density value will be considered more stable. 
One can extend this ranking to general games by studying the invariant measure of \eqref{main}, see \cite{nature2012}.

Equation \eqref{FP} is closely related to optimal transport theory and has a gradient flow interpretation in geometry. This interpretation enables us to derive the new model for discrete strategy games. In short, the optimal transport theory introduces a distance, known as the Wasserstein metric, on the probability density space. Equipped with this metric, the density space forms an infinite dimensional Riemannian manifold. On this manifold, FPE \eqref{FP} is a gradient flow of an informational functional, known as free energy:
\begin{equation}\label{free_energy}
\int_{\mathbb{R}^d}\phi(x)\rho(x)dx+\beta\int_{\mathbb{R}^d}\rho(x)\log\rho(x)dx\ .
\end{equation}
In addition, equation \eqref{FP} can be rewritten as 
\begin{equation*}
\frac{\partial \rho(t,x)}{\partial t}=\nabla\cdot(\rho(t,x)\big(\nabla \phi(x)+\beta \nabla \log \rho(t,x)\big))\ .
\end{equation*}
%the diffusion term of \eqref{FP}, transition probability corresponding to the Brownian motion, is viewed as 
%$$\Delta \rho=\nabla\cdot (\rho \nabla \log \rho)\ .$$
The term $\nabla(\phi+\log\rho)$ is called the Wasserstein gradient in \cite{am2006}. The corresponding SDE can be understood as 
\begin{equation}\label{NSDE}
dX=-\nabla\big(\phi(X)+\beta\log\rho(t,X)\big)dt\ .
\end{equation}
Notice that process $X$ and its density function are coupled. The term $\log\rho$ corresponds to the Brownian motion in the best-reply SDE. The formulation of \eqref{NSDE} gives the justification of our definition of {\em noise payoff } and motivates the definition of the jump process in discrete strategy games. 
%$$dX= -\nabla_X(\phi(X)+\beta\nabla\log\rho(t,X))dt \.$$
%Notice 
%to It is to represent the players updates their strategies according to, 

% We explain it by quoting Villani in his book \cite{vil2008}:
%$$
%\textrm{The density of gradient flow \eqref{cont_sde} is the gradient flow of \eqref{free_energy} in probability spaces.}
%$$
%probability density of $X_t$.
%Many properties of statistical physics, such as entropy dissipation \cite{li-theory} and Fisher information \cite{Fisher}, can be understood from such interpretations. %textbf{Why log rho to represent Brownian motion(irrationalities). .}  
%In this paper, our goal is to use the above ideas to build learning dynamics and to rank Nash equilibria in a discrete setting. 
%However, the above derivation highly depends on the structure of the strategy space $S$. The most general space on which one can extend the derivations is the so called length space, which unfortunately excludes the discrete space we are interested in.

% \subsection{Optimal transport on finite graphs}\label{gdg}
% We give a brief introduction on constructing a optimal transport metric
% on the probability space of finite graphs \cite{chow2012}. This allows us to study finite players' games.

% Given a simple, connected graph $G=(V,E)$, denote $\mathcal{P}(V)$ the
% probability space on $V$
% \begin{equation*}
% \mathcal{P}(V)=\{\rho=(\rho_i)_{i=1}^n\mid \sum_{i=1}^n\rho_i=1,~\rho_i\geq 0,\textrm{for all $i=1,\cdots, N$}\},
% \end{equation*}

%\section{Fokker-Planck equations of Discrete strategy Games}\label{derivation}
\section{Equilibrium selection in Discrete Strategy set} \label{e}
In this section, we study the time evolution of the probability density function of best-reply process \eqref{best_reply} for discrete strategy games.  We will show that this density function can be viewed as a FPE in discrete settings under optimal transport metric. From this density function, one can calculate the limit distribution of \eqref{best_reply} for ranking NEs. In addition, we will show that for potential games, the FPE is actually a gradient flow.

%Based on such a distance, we derive best-reply process \eqref{best_reply}, associated with FPE, for modeling players' behaviors. 
\subsection{Optimal transport in norm form game}
We first review some notations in game theory \cite{nash1950equilibrium}. Consider a game with $N$ players. Each player $i\in\{1,\cdots, N\}$ chooses a strategy $x_i$ in a discrete strategy set 
\begin{equation*}
S_i=\{1,\cdots, M_i\}
\end{equation*}
where $M_{i}$ is an integer. Denote the joint strategy set 
\begin{equation*}
S=S_1\times\cdots\times S_N\ .
\end{equation*}
Similar to continuous games, each player $i$ has a cost function $u_i:~S\rightarrow \mathbb{R}$, 
\begin{equation*}
u_i(x)=u_i(x_i, x_{-i})\ .
\end{equation*}
If there are only two players ($N=2$), it is customary to write the cost
function in a bi-matrix form $ (A,B^T)$ with $A=(u_1(i, j))_{M_{1}\times
  M_{2}}$, $B^{T}=(u_2(i,j))_{M_{1}\times M_{2}}$
where $(i, j)\in S_1\times S_2$. This form of representation is called normal
form. 

\begin{example}\label{exp}
Two members of a criminal gang are arrested and imprisoned. Each prisoner is given the opportunity either to defect the other by testifying that the other committed the crime, or to cooperate with the other by remaining silent. Their cost matrix is given by
\begin{center}
\begin{tabular}{ r|c|c| }
\multicolumn{1}{r}{}
 &  \multicolumn{1}{c}{player 2 C}
 & \multicolumn{1}{c}{player 2 D} \\
\cline{2-3}
player 1 C & (1, 1) & (3, 0) \\
\cline{2-3}
player 1 D & (0, 3) & (2, 2)\\
\cline{2-3}
\end{tabular}
\end{center}
In this case, the strategy set is $S=\{C, D\}$, where C represents ``Cooperate''
and D represents ``Defect''. The cost function can be represented as $(A, B^T)$, where 
\begin{equation*}
A=\begin{pmatrix}
1 & 3\\
0 & 2
\end{pmatrix},
\quad 
B^T=\begin{pmatrix}
1 & 0\\
3 & 2
\end{pmatrix}.
\end{equation*}
In this example, it is easy to verify that $(D, D)$ is the NE of game.
\end{example}

% In discrete settings, potential games can be defined similarly. 
% \begin{definition}\label{potential}
% A game is a potential game if there exists a function $\phi:~S_1\times\cdots\times S_N\rightarrow \mathbb{R}$ such that for any $x_i^1$, $x_i^2\in S_i$, $x_{-i}\in S_{-i}$,
% \begin{equation*}
% \phi(x_i^1,x_{-i})-\phi(x_i^2, x_{-i})=u_i(x_i^1,x_{-i})-u_i(x_i^2,x_{-i}).
% \end{equation*}
% \end{definition}
% \begin{example}\label{exp}
% Consider a two-player normal-form game $(A, B^T)$ in which $A=B^{T}$,
% $u_{1}(C,D)=u_{1}(D,C)$ and
% \begin{equation*}
% A=\begin{pmatrix}
% u_1(C,C) & u_1(C,D)\\
% u_1(D,C) & u_1(D,D)
% \end{pmatrix},
% \end{equation*}
% This is a potential game with \begin{equation*}
% \phi(x)=u_1(x),\quad x\in \{(C,C), (C, D), (D, C), (D, D)\}\ .
% \end{equation*}
% %Clearly, example 1 is a potential game as well.
% \end{example}
For a given finite-player game, we construct a corresponding strategy graph as follows.
For each strategy set $S_{i}$, construct a graph $G_i=(S_i, E_{i})$. Two strategies
$x$ and $y$ are connected if player $i$ can switch strategy from $x$ to $y$. 
%In general, this is true for all strategies, making $G_{i}$ a complete graph. 
If the player is free to switch between any two strategies, it makes $G_i$ a complete graph. 
Let $G=(S,E)=G_{1}\Box\cdots\Box G_{N}$ be the Cartesian product
of all the strategy graphs. In other words, $S=S_1\times \cdots\times S_N$ and
$x=(x_{1},\cdots,x_{N})\in S$ and $y=(y_{1},\cdots,y_{N})\in S$ are connected
if their components are different at only one index and these different components
are connected in their component graph. For any $x=(x_1,\cdots, x_N)\in S$, denote
its neighborhood to be $\mathcal{N}(x)$
\begin{equation*}
\mathcal{N}(x)=\{y\in S\mid \textrm{edge}(x,y)\in E \}\ ,
\end{equation*}
and directional neighborhood to be
\begin{equation*}
\mathcal{N}_i(x)=\{(x_1,\cdots, x_{i-1}, y, x_{i+1},\cdots, x_N)\mid y\in S_i,~\textrm{edge}(x_i,y)\in E_{i}\}\ ,
\end{equation*}
for $i=1,\cdots, N$. The definition of $\mathcal{N}_i(x)$ entails that each player selects his or her strategy with other players' strategies fixed. Notice that
\begin{equation*}
  \mathcal{N}(x)=\bigcup_{i=1}^{N}\mathcal{N}_{i}(x)\ .
\end{equation*}
\begin{example}
Consider a two player Prisoner-Dilemma game,  where $S_1=S_2=\{C,
D\}$. The strategy graph is the following.
   \begin{center}
\begin{tikzpicture}[-,shorten >=1pt,auto,node distance=3cm,
        thick,main node/.style={circle,fill=blue!20,draw,minimum size=1cm,inner sep=0pt]}]
   \node[main node] (1) {$C, C$};
    \node[main node] (2) [right =2cm]  {$C, D$};
    \node[main node] (3) [below =2cm]  {$D, C$};
    \node[main node] (4) [right =1.5 cm of 3]  {$D, D$};

    \path[draw,thick]
    (1) edge node {} (2)
    (2) edge node {} (4)
    (1) edge node {} (3);
\path[draw, thick]
    (4) edge node {} (3);
\end{tikzpicture}
\end{center}
\end{example}
%In general, games can have constraints on their strategies, for example, congestion, social network games. In these cases, strategy graphs are not necessarily complete.
%\subsection{Optimal Transport Distance on $\mathcal{P}(S)$}
We now introduce an optimal transport distance on the probability space of the strategy graph. The probability space (i.e. a simplex) on all strategies is given by:
\begin{equation*}
\mathcal{P}(S)=\{( \rho(x))_{x\in S}\in \mathbb{R}^{|S|}\mid \quad\sum_{x\in S} \rho(x)=1\ ,\quad  \rho(x)\geq 0\ , \quad \textrm{for any $x\in S$}\}\ ,
\end{equation*}
where $ \rho(x)$ is the  probability at each vertex $x$, and $|S|$ is total number of strategies. Denote the interior of $\mathcal{P}(S)$ by
 $\mathcal{P}_o(S)$. 
 
Given any function $\Phi\colon S\to \mathbb{R}$ on strategy set $S$, define
$\nabla\Phi\colon S\times S\to \mathbb{R}$ as
\begin{equation*}
\nabla\Phi(x,y)=\begin{cases} \Phi(x)-\Phi(y)\quad &\textrm{if $(x,y)\in E$\ ;}\\
0\quad &\textrm{otherwise}\ .
\end{cases}
\end{equation*} 
Let $m\colon S\times S\to \mathbb{R}$ be an anti-symmetric flux function such that $m(x,y) = -m(y,x)$. The divergence of $m$, denoted as $\textrm{div}(m)\in \mathbb{R}^{|S|}$, is defined by \begin{equation*}
\textrm{div}(m)(x) = -\sum_{y\in \mathcal{N}(x)}m(x,y)\ .
\end{equation*}
For the purpose of defining our distance function, we will use a particular flux function
\begin{equation*}
m(x,y)=\rho \nabla\Phi:=g(x,y,\rho)\nabla\Phi(x,y)\ ,
\end{equation*}
where $g(x,y, \rho)$ represents the discrete probability (weight) on $\mathrm{edge}(x,y)$ and satisfies
 \begin{equation}\label{gxy}
g(x,y,\rho)=g(y,x, \rho)\ ,\quad \min\{\rho(x), \rho(y)\}\leq g(x,y,\rho)\leq \max\{\rho(x), \rho(y)\}\ .
\end{equation}
A particular choice of $g(x,y,\rho)$ is of up-wind scheme type, whose explicit formulation will be given shortly.

We can now define the discrete inner product on $\mathcal{P}_o(S)$:%of $\nabla\Phi$ 
\begin{equation*}
(\nabla\Phi,\nabla\Phi )_\rho:=\frac{1}{2}\sum_{(x,y)\in E} (\Phi(x)-\Phi(y))^2g(x,y,\rho)\ ,
\end{equation*}
which induces the following distance on $\mathcal{P}_o(S)$. 
\begin{definition}
Given two discrete probability function $\rho^0$, $ \rho^1\in\mathcal{P}_o(S)$, define the optimal transport metric function $\mathcal{W}$:
\begin{equation*}\label{metric}
\mathcal{W}(\rho^0,\rho^1)^2=\inf \{\int_0^1(\nabla\Phi,\nabla\Phi)_\rho dt~:~ \frac{d\rho}{dt}+\mathrm{div}(\rho\nabla\Phi)=0\ ,~\rho(0)=\rho^0,~\rho(1)=\rho^1\}\ .
\end{equation*}
\end{definition}
$(\mathcal{P}_o(S), \mathcal{W})$ is a well defined finite dimensional Riemannian manifold \cite{chow2012, maas2011gradient}, which enables us to define the gradient flow (FPE) in $\mathcal{P}_o(S)$.

\subsection{FPEs for potential games}
We first derive the FPE for discrete potential games. Here a potential game means that, {\em there exists a potential function $\phi:~S\rightarrow \mathbb{R}$,
such that
\begin{equation*}
\phi(x)-\phi(y)=u_i(x)-u_i(y)\ , \quad\textrm{for any $x,y\in S_i$ and $i=1,\cdots, N$\ .}
\end{equation*}}
As in the continuous case \eqref{free_energy}, our objective functional in $\mathcal{P}(S)$
is the discrete free energy
\begin{equation*}
%\bar\Phi (\rho)=
\sum_{x\in S}\phi(x)\rho(x)+\beta \sum_{x\in S}\rho(x)\log\rho(x)\ ,
\end{equation*}
where the first term is average of potential and the second one is the linear entropy modeling risk-taking.
%\begin{equation*}
%\mathcal{S}(\rho)=\sum_{x\in S}\rho(x)\log\rho(x)\ .
%\end{equation*}

Using this objective functional, we construct the metric $\mathcal{W}$ with a upwind type $g(x,y,\rho)$ satisfying \eqref{gxy}:
$$g(x,y,\rho)= \begin{cases}
\rho(x) &\textrm{if $\phi(x)+\beta\log \rho(x)>\phi(y)+\beta\log\rho(y)$;}\\
\rho(y)& \textrm{if $\phi(x)+\beta\log \rho(x)<\phi(y)+\beta\log\rho(y)$;}\\
\frac{\rho(x)+\rho(y)}{2}& \textrm{if $\phi(x)+\beta\log \rho(x)=\phi(y)+\beta\log\rho(y)$.}\\
\end{cases}
$$ 
\begin{theorem}[Gradient flow]\label{th12}
Given a potential game with strategy graph $G=(S, E)$, potential $\phi(x) $ and constant $\beta\geq 0$.
\begin{itemize}
\item[(i)]
The gradient flow of %potential $\mathcal{\Phi}(\rho)$,
\begin{equation*}
\sum_{x\in S}\phi(x)\rho(x)+\beta\sum_{x\in S}\rho(x)\log\rho(x)\ ,
\end{equation*}
on the metric space $(\mathcal{P}_o(S), \mathcal{W})$ is the FPE 
\begin{equation*}\label{FPd}
\begin{split}
\frac{ d\rho(t,x)}{dt}&=\sum_{y\in \mathcal{N}(x)} \rho(t,y)[ \phi(y)- \phi(x)+\beta(\log\rho(t,y)-\log\rho(t,x))]_+\\
&-\sum_{y\in \mathcal{N}(x)}\rho(t,x)[ \phi(x)- \phi(y)+\beta(\log\rho(t,x)-\log\rho(t,y))]_+\ .\\
\end{split}
\end{equation*}
\item[(ii)] For $\beta>0$, Gibbs measure 
%$\rho^*=(\rho_i^*)_{i=1}^n$
\begin{equation*}\label{gibd}
\rho^*(x)=\frac{1}{K}e^{-\frac{\phi(x)}{\beta}}\ ,\quad\textrm{where}\quad K=\sum_{x\in S} e^{-\frac{\phi(x)}{\beta}}\ ,
\end{equation*}
is the unique stationary measure of ODE \eqref{FPd}. %and $\bar \Phi(\rho)$ attains the minimum at the Gibbs distribution.
\item[(iii)]  For any given initial condition $\rho^0\in\mathcal{P}_o(S)$, there
  exists a unique solution $\rho(t): [0,\infty)\rightarrow \mathcal{P}_o(S)$ to
  equation \eqref{FPd}.
\end{itemize}
\end{theorem}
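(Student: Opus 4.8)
The plan is to treat the three items in order: (i) and (ii) by direct computation on the Riemannian manifold $(\mathcal P_o(S),\mathcal W)$, and (iii) by an ODE continuation argument. For (i), I would first recall that the tangent space at $\rho\in\mathcal P_o(S)$ is $\{\sigma\in\mathbb R^{|S|}:\sum_x\sigma(x)=0\}$, that (since $G$ is connected and $g(\cdot,\cdot,\rho)>0$ in the interior) each $\sigma$ is represented uniquely up to a constant by a potential $\Phi_\sigma$ via $\sigma=-\mathrm{div}(\rho\nabla\Phi_\sigma)$, and that the metric reads $\langle\sigma_1,\sigma_2\rangle_\rho=(\nabla\Phi_{\sigma_1},\nabla\Phi_{\sigma_2})_\rho$. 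The computational engine is the discrete integration-by-parts identity $\langle\Psi,-\mathrm{div}(m)\rangle=\tfrac12\sum_{(x,y)\in E}\nabla\Psi(x,y)\,m(x,y)$, valid for any $\Psi\colon S\to\mathbb R$ and any antisymmetric $m$, obtained by resumming and using $m(x,y)=-m(y,x)$. Since $\sum_x\dot\rho(x)=0$ along any curve, the additive constant in $\partial\mathcal F/\partial\rho(x)=\phi(x)+\beta\log\rho(x)+\beta$ is annihilated, so with $\Psi:=\phi+\beta\log\rho$ one gets $d\mathcal F_\rho(\sigma)=\langle\Psi,\sigma\rangle=(\nabla\Psi,\nabla\Phi_\sigma)_\rho$ for every $\sigma$; comparing with the defining relation of the metric gradient yields $\mathrm{grad}_{\mathcal W}\mathcal F(\rho)=-\mathrm{div}(\rho\nabla\Psi)$, hence the gradient flow $\dot\rho=\mathrm{div}(\rho\nabla\Psi)$. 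It then remains to expand the divergence and insert the upwind weight: for each edge $(x,y)$ a short check of the three cases in the definition of $g$ shows $g(x,y,\rho)\big(\Psi(y)-\Psi(x)\big)=\rho(y)[\Psi(y)-\Psi(x)]_+-\rho(x)[\Psi(x)-\Psi(y)]_+$, and substituting $\Psi=\phi+\beta\log\rho$ reproduces \eqref{FPd} verbatim.

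For (ii), plugging $\rho=\rho^*$ into \eqref{FPd} gives $\Psi(x)=\phi(x)+\beta\log\rho^*(x)\equiv-\beta\log K$, a constant, so every bracket vanishes and $\rho^*$ is stationary. For uniqueness I would symmetrize $\frac{d}{dt}\mathcal F(\rho(t))=\sum_x\Psi(x)\dot\rho(x)$ in $x\leftrightarrow y$ using $g(x,y,\rho)=g(y,x,\rho)$ to obtain the dissipation identity $\frac{d}{dt}\mathcal F(\rho(t))=-\tfrac12\sum_{(x,y)\in E}g(x,y,\rho)\big(\Psi(x)-\Psi(y)\big)^2$ (the relative Fisher information); if $\bar\rho\in\mathcal P_o(S)$ is stationary this quantity is zero, and since $g(\cdot,\cdot,\bar\rho)>0$ on every edge and $G$ is connected, $\Psi=\phi+\beta\log\bar\rho$ is constant, forcing $\bar\rho\propto e^{-\phi/\beta}$, i.e.\ $\bar\rho=\rho^*$. (The same conclusion follows from a maximum-principle argument, evaluating \eqref{FPd} at a vertex maximizing $\Psi$.) Finally, no stationary measure lies on $\partial\mathcal P(S)$: if $\bar\rho(x_0)=0$, connectivity forces an edge from the zero set to a vertex of positive mass, and then the incoming sum in \eqref{FPd} at such a zero vertex is $+\infty$, contradicting $\dot{\bar\rho}(x_0)=0$.

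For (iii), although the upwind weight is only piecewise smooth in $\rho$, the right-hand side of \eqref{FPd} is locally Lipschitz on $\mathcal P_o(S)$: on each set $\{\rho:\rho(z)\ge\varepsilon\ \forall z\}$ every summand $\rho(y)\,[\phi(y)-\phi(x)+\beta(\log\rho(y)-\log\rho(x))]_+$ is a product of bounded Lipschitz functions (the $\log$ is $C^1$ there and $t\mapsto[t]_+$ is $1$-Lipschitz). Picard--Lindel\"of then gives a unique maximal solution from any $\rho^0\in\mathcal P_o(S)$; resumming the right-hand side and using antisymmetry of $\Psi(y)-\Psi(x)$ gives $\frac{d}{dt}\sum_x\rho(t,x)=0$, so the solution stays on $\sum_x\rho=1$. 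The crucial a priori estimate keeps the solution off the singular boundary: set $m(t)=\min_x\rho(t,x)$, a locally Lipschitz function with $\frac{d^+}{dt}m(t)\ge\min\{\dot\rho(t,x_0):x_0\text{ a minimizer}\}$; at a minimizer $x_0$ one has $\log\rho(x_0)\le\log\rho(y)$ for all $y$, hence $\Psi(x_0)-\Psi(y)\le\phi(x_0)-\phi(y)\le\mathrm{osc}(\phi)$, so discarding the nonnegative incoming term in \eqref{FPd} gives $\dot\rho(t,x_0)\ge-\rho(t,x_0)\,d_{\max}\,\mathrm{osc}(\phi)$ and therefore $m(t)\ge m(0)\,e^{-d_{\max}\mathrm{osc}(\phi)\,t}>0$ for every finite $t$. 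Since $\mathcal P(S)$ is compact, a maximal solution can only fail to be global by approaching $\partial\mathcal P(S)$, which is now excluded, so the solution exists in $\mathcal P_o(S)$ for all $t\ge0$ and is unique.

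I expect this last a priori bound to be the main obstacle. Because the $\log$ term makes the vector field blow up at $\partial\mathcal P(S)$, one cannot simply invoke a global Lipschitz theorem but must exploit the sign structure of \eqref{FPd} — mass is always driven \emph{into} low-probability vertices — together with a nonsmooth-calculus argument for $t\mapsto\min_x\rho(t,x)$. By contrast, the computations in (i) and (ii) are routine once the integration-by-parts identity and the case analysis of the upwind weight are in place.
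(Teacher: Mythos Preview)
Your proposal is correct, and in fact is considerably more detailed than what the paper itself supplies: the paper does not prove Theorem~\ref{th12} at all but simply writes ``The proof follows \cite{chow2012, li-theory}, so omitted here.'' Your argument is essentially a self-contained reconstruction of what one finds in those references: the identification of the Wasserstein gradient via the discrete integration-by-parts identity and the upwind case analysis for (i), the free-energy dissipation identity for uniqueness in (ii), and the Picard--Lindel\"of plus boundary-repulsion estimate for (iii). In particular, your Gr\"onwall-type lower bound $m(t)\ge m(0)e^{-d_{\max}\mathrm{osc}(\phi)\,t}$, obtained by observing that at a minimizing vertex the logarithmic part of $\Psi(x_0)-\Psi(y)$ is nonpositive, is exactly the mechanism used in \cite{chow2012} to show that trajectories never reach $\partial\mathcal P(S)$; your remark that this is ``the main obstacle'' matches the emphasis there. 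The only cosmetic point is that your boundary argument in (ii) (``the incoming sum is $+\infty$'') is better phrased as a consequence of (iii): since the ODE is not even defined on $\partial\mathcal P(S)$, uniqueness need only be argued within $\mathcal P_o(S)$, where your dissipation computation already suffices.
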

The proof follows \cite{chow2012, li-theory}, so omitted here.

\subsection{FPE for discrete strategy games}\label{general_FPE}
For general games, as in the continuous case, the
FPE, the time evolution of probability function of $X_\beta(t)$ in \eqref{best_reply}, can't be interpreted as gradient flows for some
functional. To establish FPEs 
discrete settings, we observe that in \eqref{FPd}, if the underlying graph corresponds to the
Cartesian grid partition, \eqref{FPd} is the
numerical discretization of the continuous FPE using
upwind scheme, see \cite{li-computation}. This motivates us to define the following FPE.
%\eqref{fp_general}. 
\begin{definition}
For a general game with strategy graph $G=(S,E)$ with cost functionals $u_{i}(x)$
for $i\in{1,\cdots,N}$, define its FPE to be
\begin{equation}\label{flow}
\begin{split}
\frac{d \rho(t,x)}{dt}=&\sum_{i=1}^N\sum_{y\in \mathcal{N}_i(x)}[u_i(y)- u_i(x)+\beta(\log\rho(t,y)-\log\rho(t,x))]_+\rho(t,y)\\
-&\sum_{i=1}^N\sum_{y\in \mathcal{N}_i(x)}[u_i(x)-u_i(y)+\beta(\log\rho(t,x)-\log\rho(t,y))]_+\rho(t,x)\ .
\end{split}
\end{equation}
\end{definition}
%\begin{remark}
Notice that $\cup_{i=1}^{N}N_{i}(x)=\mathcal{N}(x)$. So when the general game is a potential game, the above FPE coincides with \eqref{FPd}.
Our main result for general games is the following theorem. 
\begin{theorem}[General flow]\label{th13}
Given a $N$-player game with strategy graph $G=(S,E)$, cost functional $u_i$, $i=1,\cdots, N$ and a constant $\beta\geq 0$.
\begin{enumerate}
\item[(i)] For all $\beta> 0$ and any initial condition $\rho(0)\in \mathcal{P}_o(S)$, there exists a unique solution 
\begin{equation*}
\rho(t): [0, \infty)\rightarrow \mathcal{P}_o(S)
\end{equation*} 
 of \eqref{flow}.
\item[(ii)] Given any initial condition $\rho_{0}(t)$, denote $\rho^\beta(t)$ the solutions of \eqref{flow} with varying
  $\beta$'s. Then for any fixed time $T\in (0,
  +\infty)$
\begin{equation*}
\lim_{\beta\rightarrow 0}\rho^\beta(t)=\rho^0(t)\ ,\quad t\in[0, T]\ .
\end{equation*}
\item[(iii)] Assume there are $k$ distinct pure Nash equilibria $x^1, \cdots, x^k\in S$. Let $\rho^*(x)$ be a measure such that $$\textrm{Support of}~\rho^*(x)\subset \{x^1,\cdots, x^k\}\ ,$$
then $\rho^*(x)$ is the stationary solution of \eqref{flow} with $\beta=0$.
\end{enumerate}
\end{theorem}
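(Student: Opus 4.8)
The plan is to obtain (i) and (ii) from a single a priori estimate that is \emph{uniform in $\beta$}, and to read off (iii) directly from the definition of a pure Nash equilibrium. First I would put the right-hand side $F_\beta$ of \eqref{flow} in flux form, $\frac{d\rho(x)}{dt}=\sum_{i}\sum_{y\in\mathcal{N}_i(x)}J_i^\beta(x,y)$ with $J_i^\beta(x,y)=[\bar u_i(y)-\bar u_i(x)]_+\rho(y)-[\bar u_i(x)-\bar u_i(y)]_+\rho(x)$ and $\bar u_i(z)=u_i(z)+\beta\log\rho(z)$; the antisymmetry $J_i^\beta(y,x)=-J_i^\beta(x,y)$ gives conservation of mass $\frac{d}{dt}\sum_x\rho(x)=0$, and since $\rho\mapsto\log\rho(x)$ is smooth and $[\,\cdot\,]_+$ is $1$-Lipschitz on the relatively open set $\mathcal{P}_o(S)$, Picard--Lindel\"of gives local existence and uniqueness there. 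The key step is a lower bound that keeps the solution off $\partial\mathcal{P}(S)$: let $h(t)=\min_{x\in S}\rho(t,x)$ and let $x_0$ attain the minimum. The inflow to $x_0$ is $\ge 0$, while $\rho(x_0)\le\rho(y)$ forces $\log\rho(x_0)-\log\rho(y)\le 0$, so $[\bar u_i(x_0)-\bar u_i(y)]_+\le[u_i(x_0)-u_i(y)]_+\le C_1$ and the outflow is $\le C_1 D\,\rho(x_0)$, where $C_1=\max_i\max_{(x,y)\in E_i}|u_i(x)-u_i(y)|$ and $D=\max_x|\mathcal{N}(x)|$. Hence $\liminf_{s\downarrow 0}s^{-1}(h(t+s)-h(t))\ge -C_1 D\,h(t)$, and comparison yields $\rho(t,x)\ge h(0)\,e^{-C_1 D t}>0$, a bound that does not involve $\beta$.

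For (i), local well-posedness in $\mathcal{P}_o(S)$, conservation of mass, and this lower bound combine in the usual continuation argument: on each $[0,T]$ the solution stays in the compact set $\{\rho\in\mathcal{P}(S):\rho(x)\ge h(0)e^{-C_1 D T}\}\subset\mathcal{P}_o(S)$, hence cannot escape the interior in finite time and extends uniquely to $[0,\infty)$. For (ii), observe that $F_0$ is linear in $\rho$ (the coefficients $[u_i(y)-u_i(x)]_+$ are constants), hence globally Lipschitz on $\mathbb{R}^{|S|}$ with some constant $L_0$, conserves mass, and satisfies $F_0(\rho)(x)\ge 0$ whenever $\rho(x)=0$, so $\mathcal{P}(S)$ is forward invariant and the $\beta=0$ equation has a unique solution $\rho^0$ from the given datum $\rho_0\in\mathcal{P}_o(S)$. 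Fixing $T$, the bound above puts $\rho^\beta(t)$ in $K_T=\{\rho:\rho(x)\ge h(0)e^{-C_1 D T}\}$ for $t\in[0,T]$, where $|\log\rho(y)-\log\rho(x)|$ is bounded, so using $|[a+c]_+-[a]_+|\le|c|$ one gets $\|F_\beta(\rho)-F_0(\rho)\|\le C(T)\beta$ for $\rho\in K_T$. Then $\frac{d}{dt}\|\rho^\beta-\rho^0\|\le\|F_\beta(\rho^\beta)-F_0(\rho^\beta)\|+\|F_0(\rho^\beta)-F_0(\rho^0)\|\le C(T)\beta+L_0\|\rho^\beta-\rho^0\|$, and Gr\"onwall from the common initial value gives $\sup_{t\in[0,T]}\|\rho^\beta(t)-\rho^0(t)\|\to 0$ as $\beta\to 0$.

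For (iii), take $\beta=0$ and $\rho^*$ supported on $\{x^1,\cdots,x^k\}$. Then every flux $J_i^0(x,y)$ vanishes: the term $[u_i(y)-u_i(x)]_+\rho^*(y)$ is nonzero only if $\rho^*(y)>0$, i.e. $y$ is a pure Nash equilibrium, but then $x\in\mathcal{N}_i(y)$ is a unilateral deviation of player $i$ from $y$, so $u_i(y)\le u_i(x)$ and $[u_i(y)-u_i(x)]_+=0$; the other term vanishes symmetrically (either $\rho^*(x)=0$, or $x$ is a Nash equilibrium and $[u_i(x)-u_i(y)]_+=0$). Hence $\dot\rho^*\equiv 0$. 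The step I expect to be the main obstacle is the $\beta$-uniform lower bound in the first paragraph: potential games carry the free energy of Theorem \ref{th12} as a Lyapunov function, but general games do not, and a crude bound on $\min_x\rho$ degenerates as $\beta\to 0$; what rescues the estimate is that at a minimizing vertex the $\beta\log\rho$ contribution to the outflow has the favorable sign, so the noise only helps.
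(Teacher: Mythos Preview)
Your proof is correct and follows the same architecture as the paper's: local well-posedness in $\mathcal{P}_o(S)$, a compact trapping region uniform in $\beta$, linearity of $F_0$ feeding a Gr\"onwall comparison for (ii), and direct substitution for (iii). The one substantive difference is how the $\beta$-uniform interior bound is obtained. The paper simply asserts, by analogy with the potential case (Theorem~\ref{th12}) and a citation, that some compact $B(\rho^0)\subset\mathcal{P}_o(S)$ contains every $\rho^\beta(t)$, and then writes the ODE as $\dot\rho^\beta=Q(\rho,\beta)\rho^\beta$ with $\|(Q(\rho,\beta)-Q)\rho^\beta\|\le M\beta$ on that set. You instead produce the trapping region explicitly via the differential inequality $\frac{d}{dt}\min_x\rho(t,x)\ge -C_1D\min_x\rho(t,x)$, exploiting that at a minimizing vertex $\beta(\log\rho(x_0)-\log\rho(y))\le 0$ only reduces the outflow. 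This is more elementary and self-contained, and it makes transparent that no potential structure is needed for the estimate---something the paper's ``similar reason'' leaves to the reader. Once that bound is in hand, your Gr\"onwall step and the paper's are identical, and your flux-by-flux treatment of (iii) is the same case analysis the paper does vertex-by-vertex.
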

\begin{proof}
(i) is a slight modification of results in \cite{li-computation}. (ii) Let's denote ODE \eqref{flow} for $\beta>0$ as a matrix form 
\begin{equation*}
\frac{d\rho^{\beta}(t)}{dt}=Q(\rho,\beta)\rho^\beta(t)\ .
\end{equation*}
We observe that if $\beta=0$, $Q(\rho,\beta)=Q$ is a constant matrix. 
By the similar reason in proving Theorem \ref{th12}, we know that for any initial condition $\rho^0$, there exists a compact set $B(\rho^0)\subset \mathcal{P}_o(S)$, such that 
$\rho^\beta(t)\in B(\rho^0)$ for any $\beta$. Hence there exists a constant $M>0$, such that
\begin{equation*}
\|(Q(\rho,\beta)-Q)\rho^\beta(t)\|\leq M\beta\ ,
\end{equation*}
where $\|\cdot\|$ is the 2-norm.
In other words, the difference of the ODE \eqref{flow}'s solution at $\beta>0$ and $\beta=0$ is
\begin{equation*}
\begin{split}
\frac{d(\rho^\beta(t)-\rho^0(t))}{dt}=&Q(\rho^\beta, \beta)\rho^\beta-Q\rho^0\\
=&Q(\rho^\beta-\rho^0)+(Q(\rho^\beta, \beta)-Q)\rho^\beta\ .
\end{split}
\end{equation*}
Hence 
  \begin{equation*}
  \begin{split}\frac{d\|\rho^{\beta}(t)-\rho^0(t)\|}{dt}\leq &\|Q(\rho^{\beta}(t)-\rho^0(t))\|+\|(Q(\rho^\beta, \beta)-Q)\rho^\beta\|\\
\leq &\|Q\|\|\rho^{\beta}-\rho^0\|+\beta M\ .\\
\end{split}
\end{equation*}
By Gronwall's inequality, for $t\in [0, T]$, we have
\begin{equation*}
\|\rho^{\beta}(t)-\rho^0(t)\|\leq \beta M e^{\|Q\|T}\ ,
\end{equation*}
which finishes the proof.

We now prove (iii). Denote $\mathcal{E}=\{x^1,\cdots, x^k\}$, 
then $\textrm{Support of}~\rho^*(x)\subset \mathcal{E}$ implies
\begin{equation}\label{def}
\rho^*(x)=\begin{cases}
0 & \textrm{if $x\not\in \mathcal{E}$;}\\
\geq 0 & \textrm{if $x\in \mathcal{E}$.}
\end{cases}
\end{equation}
Since $x\in \mathcal{E}$ is a NE, $u_i(y)\geq u_i(x)$ when $y\in \mathcal{N}_i(x)$, for any $i=1,\cdots, d$. For $x\in \mathcal{E}$, we substitute $\rho^*(x)$ into the R.H.S.  \eqref{flow},  which forms
\begin{equation*}
\begin{split}
&\sum_{i=1}^N\sum_{y\in \mathcal{N}_i(x)}[u_i(y)- u_i(x)]_+\rho^*(y)-\sum_{i=1}^N\sum_{y\in \mathcal{N}_i(x)}[u_i(x)-u_i(y)]_+\rho^*(x)\\
=& \sum_{i=1}^N\sum_{y\in \mathcal{N}_i(x)}[u_i(y)- u_i(x)]\rho^*(y)-0\\
=&0\ ,
\end{split}
\end{equation*}
 where the last equality is from the following facts in two cases. (i) If $y\not\in \mathcal{E}$, $\rho^*(y)=0$ from \eqref{def}. (ii) if $y\in \mathcal{E}$, 
  $u_i(y)\geq u_i(x)$, then $u_i(y)-u_i(x)=0$. 
Similarly, we can show the case when $x\not\in \mathcal{E}$. 
\end{proof}

\subsection{Nash equilibria selection}
FPE gives the stationary distributions (equilibrium) for the dynamics. It allows us to rank different equilibria by comparing the probabilities.

 For potential games, the stationary distribution is the Gibbs measure,
which provides the same ranking as that given by simply comparing
potentials. Denote $x^{1}, \cdots, x^{k}\in S$ as distinct NEs. A natural order is as follows:
\begin{equation}\label{rank2}
x^{1}\prec x^{2}\cdots\prec x^{k}, \quad \textrm{if} \quad \rho^*(x^{1})< \cdots < \rho^*(x^{k})\ .
\end{equation}
 Here $x\prec y$ is to say that the strategy $y$ is better(more stable) than strategy $x$.
The above definition is equivalent to look at $\phi(x^1)< \cdots < \phi(x^k)$, since $\rho^*(x)=\frac{1}{K}e^{-\frac{\phi(x)}{\beta}}$.
% . Thus

For non-potential games, although there is no potentials, the stationary solution of FPE $\rho^*(t)$ still provides a way of ranking equilibria. We call it the {\em transport order of NEs}. 
%To our best knowledge, such an approach hasn't been proposed anywhere else. 
\begin{definition}[Transport order of NEs]
Assume $\rho^*(x)=\lim_{\beta\rightarrow 0}\lim_{t\rightarrow \infty} \rho(t,x)$ exits, where $\rho(t,x)$ is the solution of \eqref{flow} with any initial measure $\rho^0\in \mathcal{P}_o(S)$.
We define the order of NE by \begin{equation}\label{rank2}
x^{1}\prec x^{2}\cdots\prec x^{k}, \quad \textrm{if} \quad \rho^*(x^{1})< \cdots < \rho^*(x^{k})\ .
\end{equation}

\end{definition}

In Section \ref{examples}, we will give several examples to illustrate this selection method. %from \eqref{rank2} 

\section{Entropy dissipation}
In this section, we illustrate the connection between our Markov process and statistical physics, named the discrete H theory. We will mainly focus on potential games. We borrow two ``discrete'' physical functionals to measure the closeness between two discrete measures, $\rho$ and $\rho^\infty(x)=\frac{1}{K}e^{-\frac{\phi(x)}{\beta}}$. One is the discrete relative entropy (H)
\begin{equation*}
\mathcal{H}(\rho|\rho^\infty):=\sum_{x\in S} \rho(x) \log \frac{\rho(x)}{\rho^\infty(x)}\ .
\end{equation*}
The other is the discrete relative Fisher information (I)
\begin{equation*}
\mathcal{I}(\rho|\rho^\infty):=\sum_{(x,y)\in E}(\log\frac{\rho(x)}{\rho^\infty(x)}-\log\frac{\rho(y)}{\rho^\infty(y)})_+^2\rho(x)\ .
\end{equation*}

The H theory states that the relative entropy decreases along player's decision process. The following theorem can be viewed as discrete H theorem for finite player games.
\begin{theorem}[Discrete H theorem]
Suppose $\rho(t)$ is the transition probability of $X_\beta(t)$ in potential games.
Then the relative entropy decreases  
\begin{equation*}
\frac{d}{dt}\mathcal{H}(\rho(t)|\rho^\infty)<0\ .
\end{equation*}
And the dissipation of relative entropy is $\beta$ times relative Fisher information  
\begin{equation}\label{Fisher}
\frac{d}{dt}\mathcal{H}(\rho(t)|\rho^\infty)=-\beta\mathcal{I}(\rho(t)|\rho^\infty)\ .
\end{equation}
\end{theorem}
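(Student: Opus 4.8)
The plan is to use the relative entropy $\mathcal{H}(\cdot\,|\,\rho^\infty)$ itself as a Lyapunov functional for the Fokker--Planck equation \eqref{FPd}. The starting point is the elementary identity for the Gibbs measure: since $\rho^\infty(x)=\frac1K e^{-\phi(x)/\beta}$, the quantity $\phi(x)+\beta\log\rho^\infty(x)=-\beta\log K$ does not depend on $x$. Hence for every edge $(x,y)\in E$,
\[
\phi(y)-\phi(x)+\beta\bigl(\log\rho(t,y)-\log\rho(t,x)\bigr)=\beta\Bigl(\log\tfrac{\rho(t,y)}{\rho^\infty(y)}-\log\tfrac{\rho(t,x)}{\rho^\infty(x)}\Bigr).
\]
Writing $h(t,x):=\log\frac{\rho(t,x)}{\rho^\infty(x)}$, equation \eqref{FPd} becomes
\[
\frac{d\rho(t,x)}{dt}=\beta\sum_{y\in\mathcal{N}(x)}\Bigl(\rho(t,y)\,[\,h(t,y)-h(t,x)\,]_+-\rho(t,x)\,[\,h(t,x)-h(t,y)\,]_+\Bigr),
\]
i.e.\ an up-wind scheme whose weight on an edge is $\rho$ evaluated at the endpoint where $h$ (equivalently $\phi+\beta\log\rho$) is larger.

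Next I would differentiate $\mathcal{H}(\rho(t)|\rho^\infty)=\sum_{x\in S}\rho(t,x)\,h(t,x)$. The contribution of $\partial_t h$ is $\sum_x\rho(t,x)\cdot\frac{1}{\rho(t,x)}\frac{d\rho(t,x)}{dt}=\sum_x\frac{d\rho(t,x)}{dt}=0$ by conservation of total mass (immediate from the antisymmetric edge form of \eqref{FPd}), so
\[
\frac{d}{dt}\mathcal{H}(\rho(t)|\rho^\infty)=\sum_{x\in S}h(t,x)\,\frac{d\rho(t,x)}{dt}=\beta\sum_{x\in S}\sum_{y\in\mathcal{N}(x)}h(t,x)\Bigl(\rho(t,y)\,[\,h(t,y)-h(t,x)\,]_+-\rho(t,x)\,[\,h(t,x)-h(t,y)\,]_+\Bigr).
\]

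The heart of the proof is to reorganize this double sum edge by edge. Fix an unordered edge $\{x,y\}$ and assume $h(t,x)\ge h(t,y)$ without loss of generality (equality contributes nothing). Among the four terms attached to $\{x,y\}$ — two carrying $h(t,x)$ from vertex $x$, two carrying $h(t,y)$ from vertex $y$ — the positive parts pointing from the larger value to the smaller vanish, and the surviving two add up to $-\beta\rho(t,x)\bigl(h(t,x)-h(t,y)\bigr)^2$. Summing over all edges and matching this against the definition of $\mathcal{I}$, in which the $(\cdot)_+$ selects exactly the up-wind endpoint to carry the weight $\rho$, gives identity \eqref{Fisher}. I expect this edge-level bookkeeping — tracking which positive parts survive and checking consistency with the up-wind weight $g(x,y,\rho)$ — to be the only delicate point; everything else is routine.

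For the strict decrease, observe that $\rho(t)\in\mathcal{P}_o(S)$, so $\rho(t,x)>0$ for all $x$, and hence $\mathcal{I}(\rho(t)|\rho^\infty)=0$ would force $h(t,\cdot)$ to be constant across every edge; since $G=G_1\Box\cdots\Box G_N$ is connected, $h(t,\cdot)$ would then be constant on $S$, which together with $\sum_x\rho(t,x)=\sum_x\rho^\infty(x)=1$ gives $\rho(t)=\rho^\infty$. By uniqueness of solutions (Theorem~\ref{th12}(iii)), $\rho(0)\ne\rho^\infty$ implies $\rho(t)\ne\rho^\infty$ for all $t$, so $\mathcal{I}(\rho(t)|\rho^\infty)>0$ and thus $\frac{d}{dt}\mathcal{H}(\rho(t)|\rho^\infty)=-\beta\,\mathcal{I}(\rho(t)|\rho^\infty)<0$ whenever $\beta>0$.
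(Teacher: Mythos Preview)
Your proposal is correct and takes essentially the same approach as the paper: both rest on the identity $\phi(x)+\beta\log\rho(x)-\phi(y)-\beta\log\rho(y)=\beta\bigl(h(x)-h(y)\bigr)$ and the edge-by-edge dissipation computation that follows. The only organizational difference is that the paper first rewrites $\mathcal{H}(\rho|\rho^\infty)=\frac{1}{\beta}\bigl(\sum_x\phi(x)\rho(x)+\beta\sum_x\rho(x)\log\rho(x)\bigr)+\log K$ and then quotes the free-energy dissipation along the gradient flow \eqref{FPd} without spelling it out, whereas you differentiate $\sum_x\rho(t,x)h(t,x)$ directly and make the edge bookkeeping explicit; your argument for the strict inequality via connectedness of $G$ is likewise more detailed than the paper's one-line assertion that $\mathcal{I}(\rho|\rho^\infty)=0$ iff $\rho=\rho^\infty$.
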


\begin{proof}
Since $\mathcal{I}(\rho|\rho^\infty)\geq 0$ and equality is achieved if and only if $\rho=\rho^\infty$, we only need to prove \eqref{Fisher}. 
Substituting $\rho^\infty(x)=\frac{1}{K}e^{-\frac{\phi(x)}{\beta}}$ into the relative entropy, we observe
\begin{equation*}
\begin{split}
\mathcal{H}(\rho|\rho^\infty)=&\sum_{x\in S} \rho(x) \log \frac{\rho(x)}{\rho^\infty(x)} \\
 =&  \sum_{x\in S} \rho(x) \log \rho(x)-  \sum_{x\in S}\rho(x)\log \rho^\infty(x)\\ 
=&     \sum_{x\in S} \rho(x) \log \rho(x)+\frac{1}{\beta} \sum_{x\in S}\rho(x)\phi(x)+\log K\sum_{x\in S}\rho(x)    \\
=&\frac{1}{\beta} (\beta\sum_{x\in S} \rho(x) \log \rho(x)+\sum_{x\in S}\rho(x)\phi(x))+\log K\ .
\end{split}
\end{equation*}

From the explicit formulation of FPE \eqref{FPd}, we have
\begin{equation*}
\begin{split}
\frac{d}{dt}\mathcal{H}(\rho(t)|\rho^\infty)=&\frac{1}{\beta}\frac{d}{dt}\{\beta\sum_{x\in S} \rho(t,x) \log \rho(t,x)+\sum_{x\in S}\rho(t,x)\phi(t,x)\}\\
=&-\frac{1}{\beta}\sum_{(x,y)\in E}(\phi(x)+\beta\log\rho(t,x)-\phi(y)-\beta\log\rho(t,y) )_+^2\rho(t,x) \\
=&-\frac{1}{\beta}\cdot \beta^2\cdot \sum_{(x,y)\in E}(\log\frac{\rho(t,x)}{\rho^\infty(x)}-\log\frac{\rho(t,y)}{\rho^\infty(y)} )_+^2\rho(t,x)\\
=&-\beta\cdot \mathcal{I}(\rho(t)|\rho^\infty)\leq 0\ ,
\end{split}
\end{equation*}
which finishes the proof.
\end{proof}

Besides the discrete H theorem, there is a deep connection between FPE \eqref{FPd} and statistical physics from the mathematical viewpoint. 
This connection is known as entropy dissipation, i.e. the relative entropy decreases to zero exponentially. We show similar results for the proposed model.
\begin{theorem}[Entropy dissipation]\label{th14}
Given a potential game with $\beta>0$, $\rho^0\in \mathcal{P}_o(S)$, there exists a constant 
$C=C(\rho^0,G)>0$ such that 
\begin{equation}\label{exp}
\mathcal{H}( \rho(t)| \rho^{\infty})\leq e^{-Ct}\mathcal{H}(\rho^0|\rho^\infty)\ .
\end{equation}
\end{theorem}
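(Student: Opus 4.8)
The plan is to couple the discrete $H$-theorem already established — precisely the dissipation identity \eqref{Fisher}, $\frac{d}{dt}\mathcal{H}(\rho(t)|\rho^\infty)=-\beta\,\mathcal{I}(\rho(t)|\rho^\infty)$ — with a comparison inequality of log-Sobolev type,
\[
\mathcal{I}(\rho|\rho^\infty)\ \ge\ c_0\,\mathcal{H}(\rho|\rho^\infty),
\]
valid with a constant $c_0=c_0(\rho^0,G)>0$ for every $\rho$ in a fixed compact subset of $\mathcal{P}_o(S)$ that the orbit never leaves. Granting this inequality, $\frac{d}{dt}\mathcal{H}(\rho(t)|\rho^\infty)\le-\beta c_0\,\mathcal{H}(\rho(t)|\rho^\infty)$ along the flow, and Gronwall's inequality yields \eqref{exp} with $C=\beta c_0$ (depending, for fixed $\beta$, only on $\rho^0$ and $G$).

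First I would pin down the compact set. Exactly as in the proofs of Theorems \ref{th12} and \ref{th13} (following \cite{chow2012}), for $\beta>0$ the flow \eqref{FPd} keeps all coordinates uniformly away from $0$: there is $\delta_0=\delta_0(\rho^0)>0$ with $\rho(t,x)\ge\delta_0$ for all $t\ge0$ and $x\in S$, since whenever some $\rho(x)$ is small the term $\beta\log\rho(x)$ in \eqref{FPd} makes $\frac{d}{dt}\rho(x)>0$. Fixing $\delta\in(0,\min\{\delta_0,\min_x\rho^\infty(x)\})$, the set $B:=\{\rho\in\mathcal{P}(S):\rho(x)\ge\delta\ \text{for all }x\}$ is compact, contained in $\mathcal{P}_o(S)$, contains $\rho^\infty$ in its relative interior, and contains the whole orbit $\{\rho(t)\}_{t\ge0}$. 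On $B$ both functionals are continuous and nonnegative; $\mathcal{H}(\rho|\rho^\infty)=0$ iff $\rho=\rho^\infty$ by strict convexity of $s\mapsto s\log s$, and $\mathcal{I}(\rho|\rho^\infty)=0$ iff $\rho=\rho^\infty$ because, the edge sum running over ordered pairs and $\rho>0$ on $B$, its vanishing forces $\log(\rho/\rho^\infty)$ to be constant along each edge, hence constant on the connected graph $G$, hence $\rho=\rho^\infty$.

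The crux is therefore that $c_0:=\inf_{\rho\in B\setminus\{\rho^\infty\}}\mathcal{I}(\rho|\rho^\infty)/\mathcal{H}(\rho|\rho^\infty)>0$. Away from $\rho^\infty$ this ratio is a positive continuous function, so by compactness it suffices to bound $\liminf_{\rho\to\rho^\infty}\mathcal{I}/\mathcal{H}$ away from $0$ (a routine subsequence argument then upgrades this to the uniform bound $c_0>0$). Write $\rho=\rho^\infty+\varepsilon\sigma$ with $\sum_x\sigma(x)=0$, $\|\sigma\|=1$, and set $h(x):=\sigma(x)/\rho^\infty(x)$, so $\sum_x h(x)\rho^\infty(x)=0$. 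A Taylor expansion — using that $t\mapsto t_+$ is $1$-Lipschitz, which controls the non-smooth terms in $\mathcal{I}$ uniformly in $\sigma$ — gives
\begin{align*}
\mathcal{H}(\rho|\rho^\infty)&=\tfrac{\varepsilon^2}{2}\sum_{x\in S} h(x)^2\rho^\infty(x)+O(\varepsilon^3),\\
\mathcal{I}(\rho|\rho^\infty)&=\varepsilon^2\sum_{(x,y)\in E}(h(x)-h(y))_+^2\,\rho^\infty(x)+O(\varepsilon^3).
\end{align*}
Since $E$ consists of ordered pairs, $\sum_{(x,y)\in E}(h(x)-h(y))_+^2\rho^\infty(x)\ge(\min_x\rho^\infty(x))\sum_{\{x,y\}\in E}(h(x)-h(y))^2$, and the connected graph $G$ obeys a Poincaré inequality: there is $\lambda>0$ with $\sum_{\{x,y\}\in E}(h(x)-h(y))^2\ge\lambda\sum_x h(x)^2\rho^\infty(x)$ for all $h$ with $\sum_x h(x)\rho^\infty(x)=0$. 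Dividing, $\liminf_{\rho\to\rho^\infty}\mathcal{I}(\rho|\rho^\infty)/\mathcal{H}(\rho|\rho^\infty)\ge 2\lambda\min_x\rho^\infty(x)>0$, which gives $c_0>0$ and hence the theorem with $C=\beta c_0$.

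I expect the main obstacle to be this local analysis at $\rho^\infty$: making the $O(\varepsilon^3)$ remainders genuinely uniform in the direction $\sigma$ despite the kink in $(\cdot)_+^2$ appearing in $\mathcal{I}$, and verifying that the invariant region produced in Theorem \ref{th13} can be taken to engulf $\rho^\infty$ while remaining inside $\mathcal{P}_o(S)$. The remaining ingredients — the $H$-theorem identity \eqref{Fisher}, the compactness argument for the ratio, and the concluding Gronwall step — are soft.
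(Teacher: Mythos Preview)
The paper does not actually prove Theorem~\ref{th14}; it defers entirely to \cite{li-theory}, so there is no in-text argument to compare against. Your strategy---combine the dissipation identity \eqref{Fisher} with a log-Sobolev-type inequality $\mathcal{I}\ge c_0\mathcal{H}$ on a compact invariant slab $B\subset\mathcal{P}_o(S)$, then close with Gronwall---is the standard entropy--entropy production method and is what one expects the cited reference to contain. The sketch is sound: the invariance of $B$ follows from the repulsion term $\beta\log\rho(x)$ in \eqref{FPd} (this is the content of Theorem~\ref{th12}(iii), itself deferred to \cite{chow2012,li-theory}); both $\mathcal{H}$ and $\mathcal{I}$ vanish only at $\rho^\infty$ by connectedness of $G$; and the compactness-plus-local-expansion argument for $c_0>0$ is correct, the key point being that $t\mapsto t_+^2$ is $C^{1,1}$, so the $O(\varepsilon^3)$ remainders in the expansion of $\mathcal{I}$ are uniform over the compact sphere of directions $\sigma$. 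The weighted Poincar\'e inequality on a finite connected graph is elementary. The only places requiring care are precisely the two you flag, and neither is a genuine obstruction.
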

The proof of Theorem \ref{th14} is presented in \cite{li-theory}. 
%It is worth mentioning that the dissipation rate $C$ in \eqref{exp} connects to
%many interesting concepts related to geometry on finite graphs. See Ricci curvature's lower bound \cite{erbar2012ricci, vil2008} and Yano's formula reported in \cite{li-theory}.
\section{Examples}\label{examples}
We give several examples to illustrate the model.  

\noindent{\em Example 1:}
Consider a two-player Prisoner Dilemma $(A, B^T)$ game with cost matrix 
\begin{equation*}
A=B=\begin{pmatrix}1 & 3 \\ 0 & 2\end{pmatrix}\ .
\end{equation*}
Here the strategy set is $S=\{(C,C), (C,D), (D,C), (D, D)\}$. This particular game is a potential game, with 
\begin{equation*}
\phi(x)=-(u_1(x)+u_2(x))\ ,\quad \textrm{where $x\in S$\ .}
\end{equation*}
The strategy graph is $G=K_2\Box K_2$. 
   \begin{center}
\begin{tikzpicture}[-,shorten >=1pt,auto,node distance=3cm,
        thick,main node/.style={circle,fill=blue!20,draw,minimum size=1cm,inner sep=0pt]}]
   \node[main node] (1) {$C, C$};
    \node[main node] (2) [right =2cm]  {$C, D$};
    \node[main node] (3) [below =2cm]  {$D, C$};
    \node[main node] (4) [right =1.5 cm of 3]  {$D, D$};

    \path[draw,thick]
    (1) edge node {} (2)
    (2) edge node {} (4)
    (1) edge node {} (3);
\path[draw, thick]
    (4) edge node {} (3);

\end{tikzpicture}
\end{center}
To simplify notation, we denote the transition probability function as
\begin{equation*}
\rho(t)=(\rho_{CC}(t), \rho_{CD}(t), \rho_{DC}(t), \rho_{DD}(t))^T,
\end{equation*}
which satisfies FPE \eqref{flow}. By numerically solving \eqref{flow} for $\rho^*=\lim_{\beta\rightarrow 0}\lim_{t\rightarrow \infty}\rho(t)$, we find a unique invariant measure $\rho^*$ for any initial condition $\rho(0)$, which is demonstrated in Figure \ref{fig1}. 
\begin{figure}[H]
  \centering
{\includegraphics[scale=0.3]{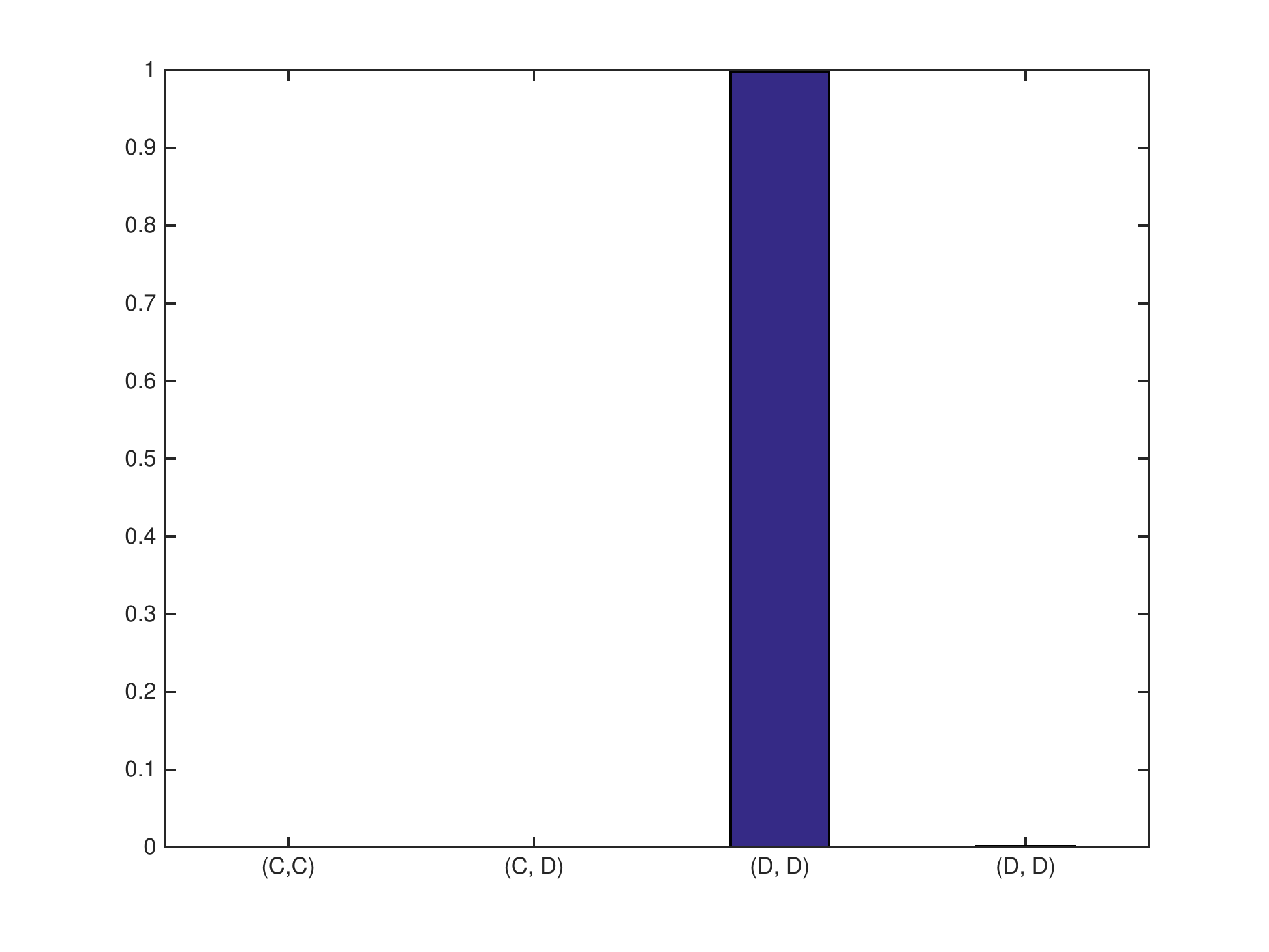}}
\caption{The invariant measure $\rho^*$ for Prisoner Dilemma.}
\label{fig1}
\end{figure}
Indeed, we know that $\rho^*$ is a Gibbs measure and $(D,D)$ is the unique
Nash equilibrium.

\noindent{\em Example 2}: Consider an asymmetric game $(A, B^T)$, i.e. $A\neq B$. This means 
players' cost depend on their own identity. Let
$A=\begin{pmatrix}
1 & 2\\ 2 &1 
\end{pmatrix}$ and 
$B=\begin{pmatrix}
1 & 3\\ 2 &1 
\end{pmatrix}$.
This game is not a potential game.  Again the strategy graph is $G=K_2\Box K_2$.   \begin{center}
\begin{tikzpicture}[-,shorten >=1pt,auto,node distance=3cm,
        thick,main node/.style={circle,fill=blue!20,draw,minimum size=1cm,inner sep=0pt]}]
   \node[main node] (1) {$C, C$};
    \node[main node] (2) [right =2cm]  {$C, D$};
    \node[main node] (3) [below =2cm]  {$D, C$};
    \node[main node] (4) [right =1.5 cm of 3]  {$D, D$};

    \path[draw,thick]
    (1) edge node {} (2)
    (2) edge node {} (4)
    (1) edge node {} (3);
\path[draw, thick]
    (4) edge node {} (3);

\end{tikzpicture}
\end{center}
By solving \eqref{flow} for $\rho^*=\lim_{\beta\rightarrow 0}\lim_{t\rightarrow \infty}\rho(t)$, we obtain a unique $\rho^*$ for any initial condition $\rho(0)$, which
is shown in Figure \ref{fig2}. 
\begin{figure}[H]
  \centering
{\includegraphics[scale=0.3]{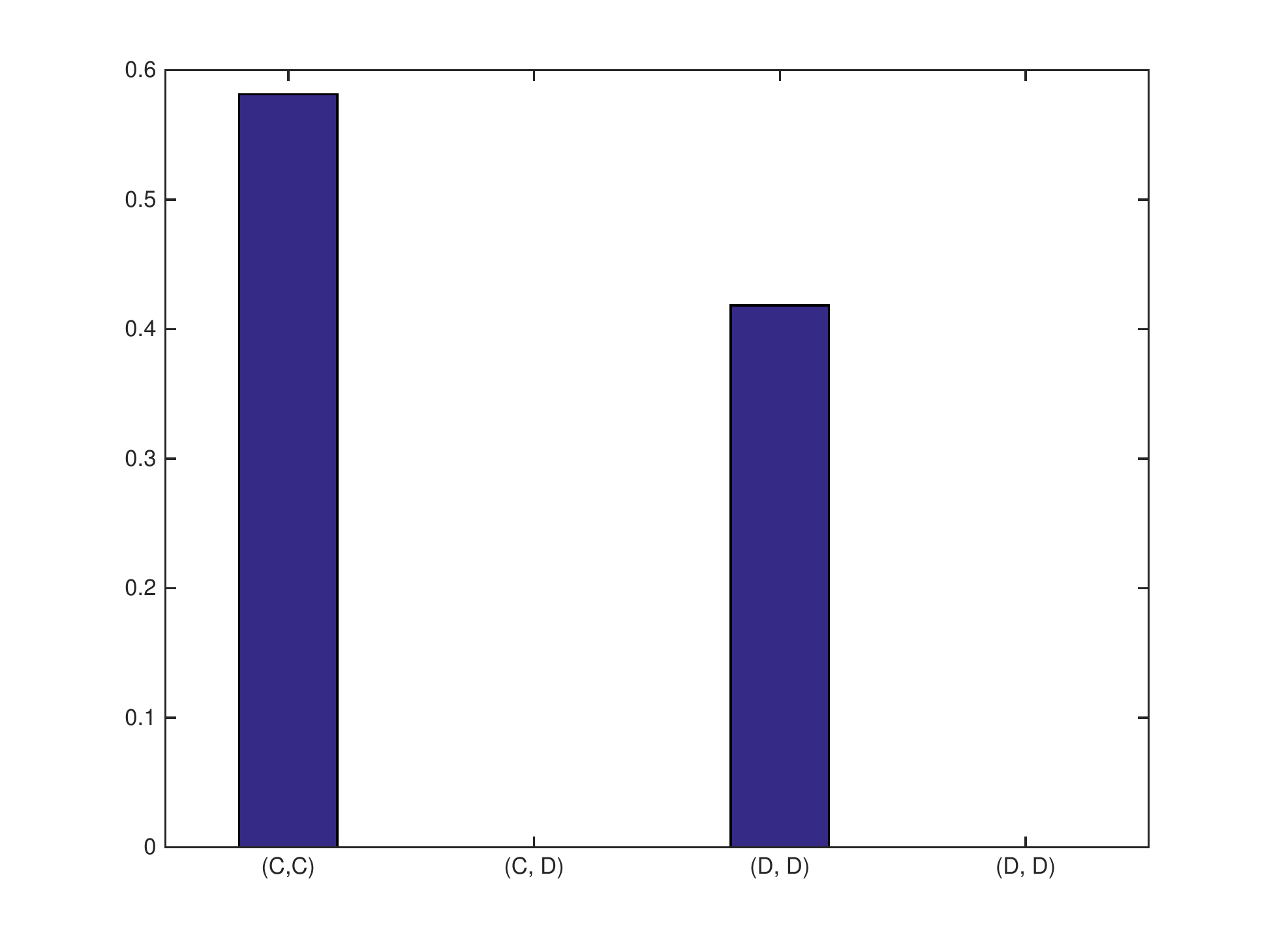}}
\caption{The invariant measure $\rho^*$ for asymmetric game.}
\label{fig2}
\end{figure}
As we can see, $\rho^*$ only supports at $(C,C)$ and $(D,D)$, both of which are
Nash equilibria of the game. Moreover, $\rho^*_{CC}$ is larger than
$\rho^*_{DD}$, which implies that $(C,C)$ is more ``stable'' than $(D,D)$. This
is intuitive because player 2 is more willing to change his/her status from $(C,D)$ to $(C,C)$ than player 1 to move the status $(D,C)$ to $(D, D)$, since player 2's cost changes more rapidly than the one of player 1: $u_2(C,D)-u_2(C,C)=2>1=u_1(D,C)-u_1(D, D)$. 

\noindent{\em Example 3}: Consider a Rock-Scissors-Paper game $(A, B^T)$ with the strategy sets $S_1=S_2=\{r, s, p\}$ and the cost matrix
\begin{equation*}
A=B=\begin{pmatrix}
0&-1&1\\
1&0&-1\\
-1&1&0\\
\end{pmatrix}\ .
\end{equation*}
The strategy graph is $G=K_3\Box K_3$:
\begin{center}
\begin{tikzpicture}[-,shorten >=1pt,auto,node distance=3cm,
        thick,main node/.style={circle,fill=blue!20,draw,minimum size=1cm,inner sep=0pt]}]
   \node[main node] (1) {$r, r$};
   \node[main node] (2) [below left=1.5cm and 0.6 cm of 1]   {$r, s$};  
   \node[main node] (3) [below right= 1.5cm and 0.6 cm of 1]  {$r, p$};
   
  \node[main node] (4) [below left =1 cm and 1 cm of 2]  {$s, r$};
  \node[main node] (5) [below left =1.5 cm  and 0.6 cm of 4]  {$s, s$};
  \node[main node] (6) [below right =1.5 cm and 0.6 cm of 4]  {$s, p$};

  \node[main node] (7) [below right =1 cm and 1 cm of 3]  {$p, r$};
  \node[main node] (8) [below left =1.5 cm and 0.6 cm of 7]  {$p, s$};
  \node[main node] (9) [below right =1.5 cm and 0.6 cm of 7]  {$p, p$};

    \path[draw,thick]
    (1) edge node {} (2)
   (2) edge node {} (3)
   (1) edge node{}(3)
 
  (4) edge node {} (5)
  (5) edge node {} (6)
  (4) edge node {} (6)
  
  (7) edge node {} (8)
  (8) edge node {} (9)
  (7) edge node {} (9)
 
  (1) edge [bend right] node {} (4)
  (2) edge [bend right ]node {} (5)
  (3) edge [bend right] node {} (6)

  (1) edge [bend left] node {} (7)
  (2) edge [bend left] node {} (8)
  (3) edge [bend left] node {} (9)

  (4) edge [bend right] node {} (7)
  (5) edge [bend right] node {} (8)
  (6) edge [bend right] node {} (9);
\end{tikzpicture}
\end{center}

Again, we obtain a unique invariant $\rho^*$ for any initial condition $\rho(0)$ in Figure \ref{fig3}. 
\begin{figure}[H]
  \centering
{\includegraphics[scale=0.3]{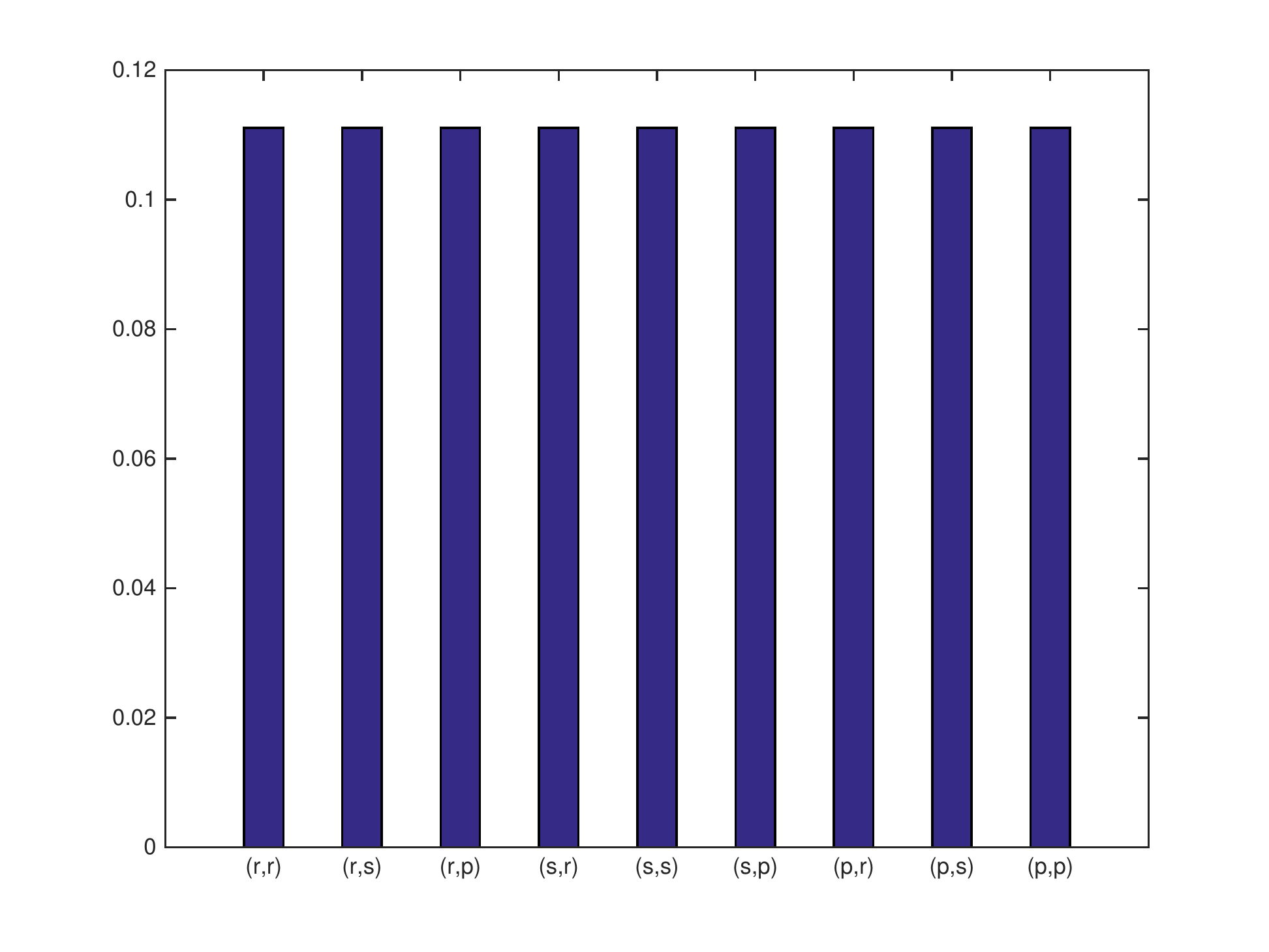}}
\caption{The invariant measure $\rho^*$ for Rock-Scissors-Paper.}
\label{fig3}
\end{figure}
From the figure, we find that the invariant measure $\rho^*$ is a uniform measure. We conclude that, although each player chooses his/her own strategy depending on each others, at the final time, they will arrive at a state that players select strategies uniformly and independently. 

\noindent{\em Example 4}. We consider the same Rock-Scissors-Paper game with
constraints, in order to illustrate the effect of the structure of the strategy graph on stationary joint probability $\rho^*$.
Here the constraint is that player 1 is not allowed to play Scissors following Rock and vice versa. There is no restriction on player 2. The corresponding strategy graph $S_{1}$ is in Figure \ref{rspincomplete} while the strategy graph $S_2$ is a complete graph. We consider $S_1\Box S_2$ for FPE \eqref{a1} and solve for the invariant measure $\rho^*$.
\begin{figure}[H]
\begin{tikzpicture}[-,shorten >=1pt,auto,node distance=3cm,
        thick,main node/.style={circle,fill=blue!20,draw,minimum size=1cm,inner sep=0pt]}]
   \node[main node] (1) {$r$};
   \node[main node] (2) [below left=1.5cm and 0.6 cm of 1]   {$s$};  
   \node[main node] (3) [below right= 1.5cm and 0.6 cm of 1]  {$p$};
   
    \path[draw,thick]
    (1) edge node {} (3)
   (2) edge node {} (3);
\end{tikzpicture}
\caption{Player 1's strategy graph}
\label{rspincomplete}
\end{figure}
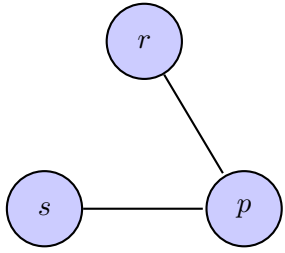
\begin{figure}[H]
  \centering
{\includegraphics[scale=0.4]{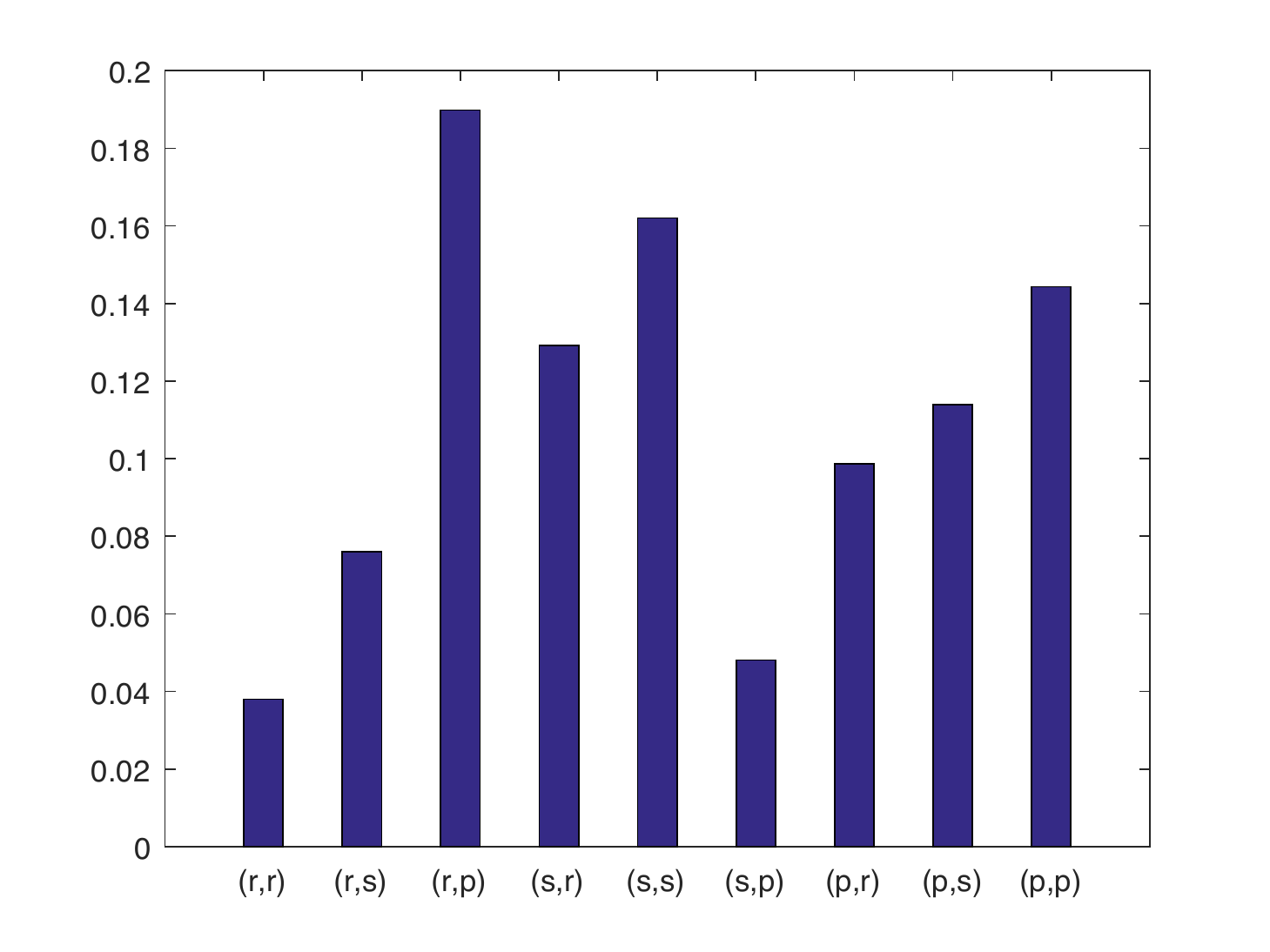}}
\caption{The invariant measure $\rho^*$ for Rock-Scissors-Paper with constraints}
\label{rspconstraint}
\end{figure}
From Figure \ref{rspconstraint}, we observe several properties that accord with modeling intuitions.
Firstly, player 1 is at disadvantage to player 2, since the chance of player 1 winning is less than that of player 2,
\begin{equation*}
\rho^*_{(r, s)}+\rho^*_{(p, r)}+\rho^*_{(s, p)}=0.2228<0.4329=\rho^*_{(s, r)}+\rho^*_{(r, p)}+\rho^*_{( p, s)}\ .
\end{equation*}
Secondly, we see that player 1 and 2's probabilities are not independent, meaning that they make decisions depending on each others' choices. 
Thirdly, from player 1's perspective, by assuming player 2 selected strategies uniformly, player 1 would choose Paper more frequently than Rock and Scissors due to the constraint. Thus in turn by taking advantage of this information, player 2 would have selected Paper (0 cost)  
or Scissors (-1 cost). This is reflected by Figure \ref{rspconstraint} that the top three states with highest probabilities are $(r,p),(s,s)$ and $(p,p)$.
% * <lwc2017@gmail.com> 2017-03-01T18:06:54.506Z:
%
% ^.

\section{Conclusion}
We summary all features of the proposed dynamic framework:
%\begin{enumerate}
First, the model incorporates players myopicity, uncertainty and greedy when making decisions;
Second, the model works for both potential and non-potential games. For potential games, the ranking of Nash equilibria given by the limit distribution coincides with the ranking given by the potential; For non-potential games, this ranking relates to the Morse decomposition and Conley-Markov matrix proposed in \cite{nature2012};
Last but not least, the FPE converges to Gibbs measure for potential games. The convergence is exponentially fast, whose rate is controlled by the relation between discrete entropy and {Fisher information} \cite{li-theory, Fisher}.

\end{document}